\title{Three Upsilon Transforms Related to Tempered Stable Distributions}
\author{Michael Grabchak\footnote{Email address: mgrabcha@uncc.edu}, {\it University of North Carolina Charlotte}}
\begin{document}
\newtheorem{prop}{Proposition}
\newtheorem{thrm}[prop]{Theorem}
\newtheorem{defn}[prop]{Definition}
\newtheorem{cor}[prop]{Corollary}
\newtheorem{lemma}[prop]{Lemma}

\newcommand{\rd}{\mathrm d}
\newcommand{\rE}{\mathrm E}
\newcommand{\ts}{TS^p_\alpha}
\newcommand{\tr}{\mathrm{tr}}
\newcommand{\iid}{\stackrel{\mathrm{iid}}{\sim}}
\newcommand{\eqd}{\stackrel{d}{=}}
\newcommand{\cond}{\stackrel{d}{\rightarrow}}
\newcommand{\conv}{\stackrel{v}{\rightarrow}}
\newcommand{\conw}{\stackrel{w}{\rightarrow}}
\newcommand{\conp}{\stackrel{p}{\rightarrow}}
\newcommand{\confdd}{\stackrel{fdd}{\rightarrow}}
\newcommand{\plim}{\mathop{\mathrm{p\mbox{-}lim}}}
\newcommand{\dlim}{\operatorname*{d-lim}}

\maketitle

\begin{abstract}

We discuss the properties of three upsilon transforms, which are related to the class of $p$-tempered $\alpha$-stable ($TS^p_\alpha$) distributions. In particular, we characterize their domains and show how they can be represented as compositions of each other.  Further, we show that if $-\infty<\beta<\alpha<2$ and $0<q<p<\infty$ then they can be used to transform the L\'evy measures of $TS^p_\beta$ distributions into those of $TS^q_\alpha$.\\

\noindent Mathematics Subject Classification (2010): 60E07; 60G51; 60H05 \\

\noindent Keywords: upsilon transforms; transforms of L\'evy measures; tempered stable distributions
\end{abstract}

\section{Introduction}

Over the past decade there has been considerable interest in the study of transforms of L\'evy measures, especially upsilon transforms, which are closely related to stochastic integration with respect to L\'evy processes. Although upsilon transforms were formally defined in \cite{Barndorff-Nielsen:Rosinski:Thorbjornsen:2008}, the concept goes back, at least, to \cite{Jurek:1990}. In this paper, we study the properties of three upsilon transforms, which are related to tempered stable distributions.

Let $\mathfrak M_{\sigma f}$ be the collection of all $\sigma$-finite Borel measures on $\mathbb R^d$ such that every $M\in\mathfrak M_{\sigma f}$ satisfies $M(\{0\})=0$, and let $\rho$ be a nonzero $\sigma$-finite Borel measure on $(0,\infty)$. A mapping $\Upsilon_\rho:\mathfrak M_{\sigma f}\mapsto\mathfrak M_{\sigma f}$ is called an upsilon transform with dilation measure $\rho$ 
if, for any $M\in\mathfrak M_{\sigma f}$, we have
\begin{eqnarray}
[\Upsilon_\rho M](B) = \int_0^\infty M(s^{-1}B)\rho(\rd s),
\quad B\in\mathfrak B(\mathbb R^d), 
\end{eqnarray}
where $\mathfrak B(\mathbb R^d)$ refers to the Borel sets in $\mathbb R^d$. Theorem 3.2 in \cite{Barndorff-Nielsen:Rosinski:Thorbjornsen:2008} tells us that if $\Upsilon_\rho M$ is a L\'evy measure then, necessarily, $M$ is a L\'evy measure as well. However, $\Upsilon_\rho M$ need not be a L\'evy measure even if $M$ is. We write $\mathfrak D(\Upsilon_\rho)$ to denote the collection of all L\'evy measures for which $\Upsilon_\rho M$ remains a L\'evy measure. This is called the domain of $\Upsilon_\rho$. Further, we write $\mathfrak R(\Upsilon_\rho)$ to denote the collection of all L\'evy measures $M$ for which there exists an $M'\in\mathfrak D(\Upsilon_\rho)$ with $M=\Upsilon_\rho M'$. This is called the range of $\Upsilon_\rho$.

A probabilistic interpretation of $\Upsilon_\rho$ is given in \cite{Barndorff-Nielsen:Rosinski:Thorbjornsen:2008}. Specifically, for an upsilon transform  $\Upsilon_\rho$ let $\eta_\rho(t) = \rho([t,\infty))$ for $t>0$. Assume that $\eta_\rho(t)<\infty$ for each $t>0$ and  let $\eta_\rho^*$ be the inverse of $\eta_\rho$ in the sense $\eta_\rho^*(t) = \inf\{s>0: \eta_\rho(s)\le t\}$ for $t>0$. Let $\{X_t:t\ge0\}$ be a L\'evy process such that the distribution of $X_1$ has L\'evy measure $M$. Define (if possible) the stochastic integral
$$
Y=\int_0^{\eta_{\rho}(0)} \eta^*_\rho(t) \rd X_t
$$
in the sense of \cite{Sato:2007}. If the integral exists then $M\in\mathfrak D(\Upsilon_\rho)$ and the distribution of $Y$ is infinitely divisible with L\'evy measure $\Upsilon_\rho M$.  However, even if $M\in\mathfrak D(\Upsilon_\rho)$ this does not guarantee that the integral exists since we must to be careful with the Gaussian part and the shift.  

In this paper we focus on upsilon transforms with the following dilation measures:\\
1. For $\alpha\in\mathbb R$ and $p>0$ let
\begin{eqnarray}
\psi_{\alpha,p}(\rd s)= s^{-\alpha-1}e^{-s^p}1_{s>0}\rd s.
\end{eqnarray}
2. For  $-\infty<\beta<\alpha<\infty$ and $p>0$ let
\begin{eqnarray}
\tau_{\beta\to\alpha,p} (\rd s)= \frac{1}{K_{\alpha,\beta,p}}s^{-\alpha-1}(1-s^p)^{(\alpha-\beta)/p-1}1_{0<s<1}\rd s,
\end{eqnarray}
where
$$
K_{\alpha,\beta,p}=\int_0^\infty u^{\alpha-\beta-1} e^{-u^p}\rd u = p^{-1}\Gamma\left(\frac{\alpha-\beta}{p}\right). 
$$
3. For $0<q<p<\infty$ and $\alpha\in\mathbb R^d$ let
\begin{eqnarray}
\pi_{\alpha,p\to q}(\rd s) = pf_{q/p}(s^{-p})s^{-\alpha-p-1}1_{s>0}\rd s,
\end{eqnarray}
where, for $r\in(0,1)$, $f_r$ is the density of a fully right skewed $r$-stable distribution with Laplace transform
\begin{eqnarray}\label{eq: laplace stable}
\int_0^\infty e^{-tx}f_r(x)\rd x = e^{-t^r}.
\end{eqnarray}

For simplicity of notation we write
$$
\Psi_{\alpha,p}=\Upsilon_{\psi_{\alpha,p}},\ \mathfrak T_{\beta\to\alpha,p} = \Upsilon_{\tau_{\beta\to\alpha,p} }, \mbox{ and } \mathfrak P_{\alpha,p\to q} = \Upsilon_{\pi_{\alpha,p\to q} }.
$$
The transform $\Psi_{\alpha,p}$ was first introduced in \cite{Maejima:Nakahara:2009} and then further studied in \cite{Maejima:Ueda:2010} and \cite{Maejima:Perez-Abreu:Sato:2013}. Several important subclasses were studied in \cite{Sato:2006}, \cite{Barndorff-Nielsen:Maejima:Sato:2006}, and \cite{Jurek:2007}. The transform, $\mathfrak T_{\alpha\to\beta,p}$ was discussed in Section 4 of \cite{Maejima:Perez-Abreu:Sato:2013}, and the case where $p=1$ was considered in \cite{Sato:2006}, \cite{Sato:2010}, and \cite{Jurek:2014}. The transform $\mathfrak P_{\alpha,p\to q}$ is essentially new, although it appears, implicitly, in \cite{Grabchak:2012}. Further, a related transform is studied in \cite{Barndorff-Nielsen:Thorbjornsen:2006}.

The transform  $\Psi_{\alpha,p}$ is closely related to the class of tempered stable distributions, which is a class of models that is obtained by modifying the tails of infinite variance stable distributions to make them lighter.  These models were first introduced in \cite{Rosinski:2007}. The more general class of $p$-tempered $\alpha$-stable distributions ($TS^p_\alpha$), where $p>0$ and $\alpha<2$, was introduced in \cite{Grabchak:2012} as a class of infinitely divisible distributions with no Gaussian part and a L\'evy measure of the form $\Psi_{\alpha,p} M$, where $M\in\mathfrak D(\Psi_{\alpha,p})$.  If we allow these distributions to have a Gaussian part then we get the class of models studied in \cite{Maejima:Nakahara:2009}. This, in turn, contains important subclasses including the Thorin class, the Goldie-Steutel-Bondesson class, the class of type M distributions, and the class of generalized type G distributions. For more about tempered stable distributions and their use in a variety of application areas see  \cite{Rachev:Kim:Bianchi:Fabozzi:2011}, \cite{Grabchak:2015a}, \cite{Grabchak:2015b}, and the references therein. 

The relationship between tempered stable distributions and the transforms $\mathfrak T_{\beta\to\alpha,p}$ and $\mathfrak P_{\alpha,p\to q} $ will become apparent from studying the relationships among the transforms. Several such relationships  are known. Specifically, if $-\infty<\gamma<\beta<\alpha<\infty$ then Theorem 3.1 in \cite{Sato:2006} (see also \cite{Jurek:2014}) implies that 
$$
\Psi_{\alpha,1}
= \mathfrak T_{\beta\to\alpha,1} \Psi_{\beta,1}
$$ 
and Theorem 4.7 in \cite{Sato:2010} implies that 
$$
\mathfrak T_{\gamma\to\alpha,1} = \mathfrak T_{\beta\to\alpha,1} \mathfrak T_{\gamma\to\beta,1}.
$$
We will show that these relations hold with $1$ replaced by any $p>0$. Further, we show that if $0<r<q<p<\infty$ and $\alpha\in\mathbb R$ then
$$
\Psi_{\alpha,q} = \mathfrak P_{\alpha,p\to q}\Psi_{\alpha,p}
$$
and
$$
\mathfrak P_{\alpha,p\to r} =\mathfrak P_{\alpha,q\to r}\mathfrak P_{\alpha,p\to q}.
$$
Putting these together implies that if $-\infty<\beta<\alpha<\infty$ and $0<q<p<\infty$ then
$$
\Psi_{\alpha,q} = \mathfrak P_{\alpha,p\to q}\mathfrak T_{\beta\to\alpha,p}\Psi_{\beta,p} =  \mathfrak T_{\beta\to\alpha,q}\mathfrak P_{\beta,p\to q}\Psi_{\beta,p}.
$$
Thus we can transform $\Psi_{\beta,p}$ into $\Psi_{\alpha,q}$ by using the other two transforms. In the context of tempered stable distributions this means that we can transform the L\'evy measures of $TS^p_\beta$ distributions into those of $TS^q_\alpha$.

\section{Main Results}

We begin by characterizing the domains of the transforms of interest. Toward this end we introduce some notation. For $\alpha\in[0,2]$ let $\mathfrak M^\alpha$ be the class of Borel measures on $\mathbb R^d$ such that $M\in\mathfrak M^\alpha$ if and only if
\begin{eqnarray}
M(\{0\})=0 \mbox{ and } \int_{\mathbb R^d}\left(|x|^2\wedge |x|^\alpha\right)M(\rd x)<\infty.
\end{eqnarray}
Note that if $0<\alpha_1<\alpha_2<2$ then $\mathfrak M^{2}\subsetneq\mathfrak M^{\alpha_2}\subsetneq\mathfrak M^{\alpha_1}\subsetneq \mathfrak M^0$, and that the class $\mathfrak M^0$ is the class of all L\'evy measures on $\mathbb R^d$. Let $\mathfrak M^{\log}$ be the subclass of $\mathfrak M^0$ such that $M\in\mathfrak M^{\log}$ satisfies $\int_{|x|>1}\log|x|M(\rd x)<\infty$.

\begin{thrm}\label{thrm: domains}
1. For $-\infty<\beta<\alpha<\infty$ and $p>0$ we have\\
$$
\mathfrak D(\mathfrak T_{\beta\to\alpha,p})=\mathfrak D(\Psi_{\alpha,p})=\left\{
\begin{array}{ll}
\mathfrak M^0 & \mbox{if }\alpha<0\\
\mathfrak M^{\log}& \mbox{if }\alpha=0\\
\mathfrak M^\alpha& \mbox{if }\alpha\in(0,2)\\
\{0\} & \mbox{if }\alpha\ge2
\end{array}
\right..
$$
2. For $\alpha\in\mathbb R$ and $0<q<p<\infty$ we have\\
$$
\mathfrak D(\mathfrak P_{\alpha,p\to q})=\left\{
\begin{array}{ll}
\mathfrak M^0 & \mbox{if }\alpha-q<0\\
\mathfrak M^{\log}& \mbox{if }\alpha-q=0\\
\mathfrak M^{\alpha-q}& \mbox{if }\alpha-q\in(0,2)\\
\{0\} & \mbox{if }\alpha-q\ge2
\end{array}
\right..
$$
\end{thrm}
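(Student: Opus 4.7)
The plan is to reduce the question to a single integrability test and then estimate the resulting kernel for each of the three dilation measures. By Fubini applied to the L\'evy condition $\int(|y|^2\wedge 1)[\Upsilon_\rho M](\rd y)<\infty$, one obtains
$$\int(|y|^2\wedge 1)[\Upsilon_\rho M](\rd y) = \int_{\mathbb R^d} G_\rho(x) M(\rd x), \qquad G_\rho(x) := \int_0^\infty (s^2|x|^2\wedge 1)\rho(\rd s),$$
so that $M\in\mathfrak D(\Upsilon_\rho)$ if and only if $\int G_\rho(x) M(\rd x)<\infty$ (the condition $[\Upsilon_\rho M](\{0\})=0$ is automatic from $M(\{0\})=0$). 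It therefore suffices to obtain two-sided asymptotic bounds $G_\rho(x)\asymp H(|x|)$ for each $\rho$; the characterizations in terms of $\mathfrak M^\alpha$, $\mathfrak M^{\log}$, $\mathfrak M^0$, and $\{0\}$ then follow immediately by integrating against $M$.

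For $\Psi_{\alpha,p}$ and $\mathfrak T_{\beta\to\alpha,p}$ the kernel $G_\rho$ is elementary. I would split the $s$-integral at $s=1/|x|$ (also enforcing $s<1$ for $\mathfrak T$). On $s<1/|x|$ we have $s^2|x|^2\wedge 1 = s^2|x|^2$, so the integrand contains the factor $|x|^2 s^{1-\alpha}$ multiplied by a bounded perturbation ($e^{-s^p}$ for $\Psi$, or $(1-s^p)^{(\alpha-\beta)/p-1}$ for $\mathfrak T$, both well-behaved as $s\to 0$); on $s>1/|x|$ the factor is $s^{-\alpha-1}$ times the same perturbation, whose integrability at the right endpoint is controlled by exponential decay for $\Psi$ and by $(\alpha-\beta)/p>0$ for $\mathfrak T$. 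Routine power-integral computations then give $G_\rho(x)\asymp |x|^2$ as $|x|\to 0$ provided $\alpha<2$, together with $G_\rho(x)\asymp |x|^\alpha$, $\log|x|$, or a bounded quantity as $|x|\to\infty$ according to whether $\alpha\in(0,2)$, $\alpha=0$, or $\alpha<0$. When $\alpha\ge 2$ the small-$s$ contribution already diverges, so $G_\rho(x)=+\infty$ for every $x\ne 0$ and only the zero measure lies in the domain. Combined with the L\'evy integrability requirement on $M$, this yields exactly the four cases listed in part~1.

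For $\mathfrak P_{\alpha,p\to q}$ the same approach works, but the estimates are more delicate because $f_{q/p}$ is not elementary. Setting $v=s^{-p}$ recasts the kernel as
$$G_\rho(x) = \int_0^\infty \bigl(v^{-2/p}|x|^2\wedge 1\bigr) v^{\alpha/p} f_{q/p}(v)\,\rd v,$$
and I would split at $v=|x|^p$. The argument rests on two classical facts about the fully right-skewed positive $(q/p)$-stable density: the tail estimate $f_{q/p}(v)\sim C v^{-q/p-1}$ as $v\to\infty$, and super-polynomial decay as $v\to 0^+$, so that every negative moment of $V\sim f_{q/p}$ is finite. Combining these with the standard moment calculus for positive stable laws ($\rE[V^{\gamma}]<\infty$ iff $\gamma<q/p$, with truncated moments behaving like $T^{\gamma-q/p}$, $\log T$, or a constant as $T\to\infty$ according to sign), one finds $G_\rho(x)\asymp |x|^2$ as $|x|\to 0$ whenever $\alpha-q<2$, while as $|x|\to\infty$ the kernel behaves as $|x|^{\alpha-q}$, $\log|x|$, or a bounded quantity according to whether $\alpha-q\in(0,2)$, $=0$, or $<0$. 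When $\alpha-q\ge 2$ the integrand at $v\to\infty$ is proportional to $v^{(\alpha-2-q)/p-1}|x|^2$, which is not integrable, so $G_\rho(x)=+\infty$ off the origin and $\mathfrak D(\mathfrak P_{\alpha,p\to q})=\{0\}$.

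The main obstacle is the $\mathfrak P$ case, specifically the bookkeeping with the non-elementary density $f_{q/p}$: one must simultaneously track contributions of $v$ near $0$, near $|x|^p$, and in the tail, and ensure that the lower and upper bounds match. Once the two asymptotic regimes of $f_{q/p}$ are controlled, extracting the two-sided bounds on $G_\rho$ is mechanical and the remainder of the proof is purely measure-theoretic.
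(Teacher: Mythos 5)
Your proposal is correct and follows essentially the same route as the paper: the paper likewise reduces the question via Fubini to the finiteness of $\int_{\mathbb R^d}\bigl(|x|^2\int_0^{1/|x|}s^2\rho(\rd s)+\rho([1/|x|,\infty))\bigr)M(\rd x)$, controlled by the power behavior $g(s)\asymp s^{-\alpha-1}$ (resp.\ $s^{q-\alpha-1}$ for $\mathfrak P$, via the stable tail asymptotics $f_{q/p}(v)\sim Kv^{-q/p-1}$) near $s=0$ together with $\int_0^\infty s^2 g(s)\rd s<\infty$. The only cosmetic difference is that the paper packages these estimates into a single general lemma applied to all three dilation measures, while you treat the kernels case by case.
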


The proof follows from a general result and is given in Section \ref{sec: proofs}. We note that $\mathfrak D(\Psi_{\alpha,p})$ was already fully characterized in  \cite{Maejima:Nakahara:2009}.  Henceforth, we assume that $\alpha<2$ in the case of $\mathfrak T_{\beta\to\alpha,p}$ and $\Psi_{\alpha,p}$ and that $\alpha<2+q$ in the case of $\mathfrak P_{\alpha,p\to q}$. Of course, our results will, trivially, remain true for the other cases. 

We now turn to the composition of transforms. Let $\Upsilon_{\rho_1},\Upsilon_{\rho_2}$ be two upsilon transforms and define the composition $\Upsilon_{\rho_2}\Upsilon_{\rho_1}$ on the domain 
$$
\mathfrak D(\Upsilon_{\rho_2}\Upsilon_{\rho_1})=\{M\in\mathfrak D(\Upsilon_{\rho_1}):\Upsilon_{\rho_1}M\in\mathfrak D(\Upsilon_{\rho_2})\}.
$$
Proposition 4.1 in \cite{Barndorff-Nielsen:Rosinski:Thorbjornsen:2008} tells us that $\mathfrak D(\Upsilon_{\rho_2}\Upsilon_{\rho_1})=\mathfrak D(\Upsilon_{\rho_1}\Upsilon_{\rho_2})$ and that 
$$
\Upsilon_{\rho_2}\Upsilon_{\rho_1}=\Upsilon_{\rho_1}\Upsilon_{\rho_2}.
$$ 
Thus compositions of upsilon transforms commute. To give a better understanding of the domains of compositions we give the following.

\begin{lemma}\label{lemma: ranges}
If $-\infty<\gamma<\beta<\alpha<\infty$ and $0<r<q<p<\infty$ then 
\begin{eqnarray*}
\mathfrak R(\mathfrak T_{\beta\to\alpha,p}) &\subset& \mathfrak D(\Psi_{\beta,p}),\\
\mathfrak R(\mathfrak T_{\beta\to\alpha,p}) &\subset& \mathfrak D(\mathfrak T_{\gamma\to\beta,p}),\\
\mathfrak R(\Psi_{\alpha,p}) &\subset& \mathfrak D(\mathfrak P_{\alpha,p\to q}),\\
\mathfrak R(\mathfrak P_{\alpha,q\to r}) &\subset& \mathfrak D(\mathfrak P_{\alpha,p\to q}),\\
\mathfrak R(\mathfrak T_{\beta\to\alpha,p}) &\subset& \mathfrak D(\mathfrak P_{\alpha,p\to q}).
\end{eqnarray*}
\end{lemma}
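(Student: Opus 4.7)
The plan is to prove all five inclusions by the same reduction. Each inclusion has the form $\mathfrak R(\Upsilon_{\rho_1}) \subset \mathfrak D(\Upsilon_{\rho_2})$, and by Theorem~\ref{thrm: domains} membership in either domain is characterized by an integrability condition of the form $\int h_{\gamma}(|x|)\,M(\rd x) < \infty$, where $h_\gamma(\lambda) = \lambda^2 \wedge \lambda^\gamma$ for $\gamma \in (0,2)$, $h_0(\lambda) = \lambda^2$ for $\lambda \le 1$ and $h_0(\lambda) = \log \lambda$ for $\lambda > 1$, and $h_\gamma(\lambda) = \lambda^2 \wedge 1$ for $\gamma < 0$. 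Let $\gamma_1, \gamma_2$ be the indices associated with $\rho_1, \rho_2$: in every one of the five inclusions we have $\gamma_2 < \gamma_1$ (for example $\gamma_1 = \alpha$, $\gamma_2 = \beta$ in the second inclusion, and $\gamma_1 = \alpha - r$, $\gamma_2 = \alpha - q$ in the fourth). For any $M \in \mathfrak D(\Upsilon_{\rho_1})$ and $N = \Upsilon_{\rho_1} M$, Fubini's theorem gives
\begin{equation*}
\int h_{\gamma_2}(|x|)\,N(\rd x) = \int \left[\int_0^\infty h_{\gamma_2}(s|x|)\,\rho_1(\rd s)\right] M(\rd x),
\end{equation*}
so it suffices to prove the pointwise bound
\begin{equation*}
G(\lambda) := \int_0^\infty h_{\gamma_2}(s\lambda)\,\rho_1(\rd s) \le C\,h_{\gamma_1}(\lambda), \quad \lambda > 0,
\end{equation*}
for a constant $C$ depending only on the parameters.

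Then I would establish this case-by-case by splitting the $s$-integral at $s = 1/\lambda$, where $s\lambda$ crosses $1$ (so $h_{\gamma_2}(s\lambda) = (s\lambda)^2$ on the low side and $(s\lambda)^{\gamma_2}$ on the high side). For $\rho_1 = \tau_{\beta \to \alpha, p}$, integrability of $s^{-\alpha-1}(1-s^p)^{(\alpha-\beta)/p-1}$ near $s=0$ uses $\alpha < 2$, while the non-integrable singularity at $s=1$ (coming from $(1-s^p)^{(\alpha-\beta)/p-1}$ since $\beta < \alpha$) produces the $\lambda^\alpha$ asymptotic in $G(\lambda)$; the substitution $u = 1 - s^p$ followed by explicit computation gives $G(\lambda) \le C(\lambda^2 \wedge \lambda^\alpha) = C h_\alpha(\lambda)$, settling inclusions 1, 2, and 5. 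For $\rho_1 = \psi_{\alpha,p}$, the factor $e^{-s^p}$ makes integrability at $s=\infty$ automatic, and the same splitting produces $G(\lambda) \le C h_\alpha(\lambda)$, which, applied with $\gamma_2 = \alpha - q$, yields inclusion 3. For $\rho_1 = \pi_{\alpha, p \to q}$, the known asymptotics of the stable density $f_{q/p}$ (polynomial decay of exponent $-q/p - 1$ at $\infty$, super-exponential decay at $0$) translate into $\pi_{\alpha,p\to q}(\rd s)$ having a polynomial tail of order $s^{-(\alpha-q)-1}$ at $s=\infty$ and super-polynomial decay at $s=0$; the same splitting then yields $G(\lambda) \le C h_{\alpha-r}(\lambda)$, giving inclusion 4.

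The borderline cases $\gamma_2 \le 0$ are handled separately but more easily: for $\gamma_2 < 0$ the target condition is simply that $N$ is a L\'evy measure, which is automatic from $M \in \mathfrak D(\Upsilon_{\rho_1})$; for $\gamma_2 = 0$ the inequality $\log(s\lambda) \le |\log s| + \log \lambda$ (in the region $s\lambda > 1$) lets the $|\log s|$ contribution be absorbed by the decay of $\rho_1$, reducing back to the standard estimate. The main obstacle I anticipate is the careful bookkeeping of these piecewise power-law estimates, in particular handling the logarithmic boundary exponents $\gamma_2 = 0$ (and $\alpha - q = 0$ for $\mathfrak P$) uniformly alongside the strict cases, and invoking the correct asymptotics of $f_{q/p}$ to control the tail of $\pi_{\alpha, p \to q}$; once the pointwise estimates on $G(\lambda)$ are secured, all five inclusions fall out uniformly from the Fubini reduction.
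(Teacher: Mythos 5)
Your overall strategy is the right one and is essentially what the paper does: the paper's Lemma \ref{lemma: domains} packages your Fubini reduction and pointwise bound into a single general statement (if the dilation density satisfies $g(s)\asymp s^{-\gamma_1-1}$ near $s=0$ and $\int_0^\infty s^2g(s)\,\rd s<\infty$, then $\mathfrak R(\Upsilon_\rho)\subset\mathfrak M^{\beta}$ for all $\beta<\gamma_1$), and Lemma \ref{lemma: ranges} then follows by checking these hypotheses for each dilation measure, with only $\pi$ requiring the stable-density asymptotics. So the plan is sound and the claimed bounds $G(\lambda)\le C\,h_{\gamma_1}(\lambda)$ are all correct.

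However, two of your stated mechanisms are backwards, which suggests the estimates have not actually been carried through. First, for $\tau_{\beta\to\alpha,p}$ the singularity at $s=1$ is \emph{integrable}: the exponent $(\alpha-\beta)/p-1$ exceeds $-1$ precisely because $\beta<\alpha$, so that endpoint contributes only a constant and your substitution $u=1-s^p$ is aimed at a red herring. The $\lambda^\alpha$ growth of $G(\lambda)$ comes entirely from the factor $s^{-\alpha-1}$ near $s=0$, i.e.\ from the region $s\approx 1/\lambda$ in your splitting. Second, for the $\pi$ measures you have swapped the endpoints: since $g(s)=pf_{q/p}(s^{-p})s^{-\alpha-p-1}$ and $f_{q/p}(x)\sim Kx^{-q/p-1}$ as $x\to\infty$, the polynomial behavior $g(s)\sim pKs^{-(\alpha-q)-1}$ occurs as $s\downarrow 0$ (where $s^{-p}\to\infty$), while the super-polynomial decay occurs as $s\to\infty$. (There is also an index slip in inclusion 4: there $\rho_1=\pi_{\alpha,q\to r}$, whose density behaves like $s^{-(\alpha-r)-1}$ near $0$, which is what yields $G(\lambda)\le Ch_{\alpha-r}(\lambda)$ and lands in $\mathfrak D(\mathfrak P_{\alpha,p\to q})=\mathfrak M^{\alpha-q}$ because $\alpha-q<\alpha-r$.) Once you redirect the analysis to the $s\downarrow 0$ behavior of each dilation density, the uniform statement you want is exactly the paper's Lemma \ref{lemma: domains}, and everything else in your sketch (the trivial cases $\gamma_2<0$, the logarithmic case $\gamma_2=0$) goes through as you describe.
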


The proof follows from a general result and is given in Section \ref{sec: proofs}. We now state our main result.

\begin{thrm}\label{thrm: composition}
1. If $-\infty<\beta<\alpha<2$ and $p>0$ then
\begin{eqnarray*}
\Psi_{\alpha,p} = \mathfrak T_{\beta\to\alpha,p}\Psi_{\beta,p}.
\end{eqnarray*}
2. If $-\infty<\gamma<\beta<\alpha<2$ and $p>0$ then
\begin{eqnarray*}
\mathfrak T_{\gamma\to\alpha,p} = \mathfrak T_{\beta\to\alpha, p} \mathfrak T_{\gamma\to\beta,p}.
\end{eqnarray*}
3. If $\alpha<2$ and $0<q<p<\infty$ then
\begin{eqnarray*}
\Psi_{\alpha,q} = \mathfrak P_{\alpha,p\to q}\Psi_{\alpha,p}.
\end{eqnarray*}
4. If $0<r<q<p<\infty$ and $-\infty<\alpha<2+r$ then
$$
\mathfrak P_{\alpha,p\to r} = \mathfrak P_{\alpha,q\to r}\mathfrak P_{\alpha,p\to q}.
$$
\end{thrm}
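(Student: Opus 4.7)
The plan is to reduce each of the four identities to an equality of the corresponding dilation measures on $(0,\infty)$. Unwinding the definitions and applying Fubini to positive integrands shows that for any two upsilon transforms $\Upsilon_{\rho_1},\Upsilon_{\rho_2}$ and any $M\in\mathfrak D(\Upsilon_{\rho_1}\Upsilon_{\rho_2})$,
$$
[\Upsilon_{\rho_1}\Upsilon_{\rho_2} M](B) = \int_0^\infty M(u^{-1}B)\,(\rho_1 \star \rho_2)(\rd u),\qquad B\in\mathfrak B(\mathbb R^d),
$$
where $\rho_1 \star \rho_2$ denotes the multiplicative convolution on $(0,\infty)$, i.e., the pushforward of $\rho_1 \otimes \rho_2$ under $(s_1,s_2)\mapsto s_1 s_2$. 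Hence the four assertions reduce to the dilation-measure identities $\psi_{\alpha,p}=\tau_{\beta\to\alpha,p}\star\psi_{\beta,p}$, $\tau_{\gamma\to\alpha,p}=\tau_{\beta\to\alpha,p}\star\tau_{\gamma\to\beta,p}$, $\psi_{\alpha,q}=\pi_{\alpha,p\to q}\star\psi_{\alpha,p}$, and $\pi_{\alpha,p\to r}=\pi_{\alpha,q\to r}\star\pi_{\alpha,p\to q}$. Before computing, I would invoke Theorem~\ref{thrm: domains} together with Lemma~\ref{lemma: ranges} to confirm that both sides of each equation are well-defined on the stated classes of L\'evy measures, so that a full equality of transforms (and not merely of dilation measures on $(0,\infty)$) is what is at stake.

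For parts 1 and 2, I would compute the multiplicative convolutions of the explicit densities. In (1) the resulting integral, after writing $U\sim\psi_{\beta,p}$ and $S\sim\tau_{\beta\to\alpha,p}$ and computing the density of $US$, contains an integrand of the form $s^{\beta-\alpha-1}e^{-(v/s)^p}(1-s^p)^{(\alpha-\beta)/p-1}$; the substitution $t = s^p$ followed by $u = v^p/t$ converts this to an incomplete Gamma integral, and the identity $p K_{\alpha,\beta,p}=\Gamma((\alpha-\beta)/p)$ produces exactly $\psi_{\alpha,p}(\rd v)=v^{-\alpha-1}e^{-v^p}\rd v$. For (2) the same substitution $t=s^p$ followed by the affine change $w=(t-v^p)/(1-v^p)$ turns the remaining integrand into $w^{(\beta-\gamma)/p-1}(1-w)^{(\alpha-\beta)/p-1}$, which integrates to a Beta function; the ratio of normalizing constants then collapses via $B(x,y)\Gamma(x+y)=\Gamma(x)\Gamma(y)$ to give $\tau_{\gamma\to\alpha,p}(\rd v)$.

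For part 3, I would substitute $t=s^{-p}$ in the convolution integral for $\pi_{\alpha,p\to q}\star\psi_{\alpha,p}$. After cancellation it reduces to
$$
v^{-\alpha-1}\int_0^\infty e^{-v^p t} f_{q/p}(t)\,\rd t,
$$
and the Laplace transform identity \eqref{eq: laplace stable} evaluates the integral as $e^{-(v^p)^{q/p}}=e^{-v^q}$, yielding $\psi_{\alpha,q}$.

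For part 4, I would first observe that the factor $s^{-\alpha}$ in $\pi_{\alpha,p\to q}(\rd s)=s^{-\alpha}\cdot p f_{q/p}(s^{-p})s^{-p-1}\rd s$ factors cleanly through the multiplicative convolution, so it suffices to handle the $\alpha=0$ case. The remaining measure is precisely the law of $U^{-1/p}$ for $U$ with density $f_{q/p}$, so the identity reduces to showing that if $U\sim f_{q/p}$ and $V\sim f_{r/q}$ are independent, then $UV^{p/q}\sim f_{r/p}$. This is the stable subordination identity: conditioning on $V$ and using \eqref{eq: laplace stable} twice gives $\rE[e^{-t UV^{p/q}}]=\rE[e^{-t^{q/p}V}]=e^{-t^{r/p}}$, which characterizes the distribution. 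I expect part 4 to be the main obstacle, since direct manipulation of $f_{q/p}$ in real variables is infeasible and the Laplace-transform/subordination detour is what renders the calculation tractable.
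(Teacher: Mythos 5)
Your proposal is correct and is essentially the paper's proof: the reduction to multiplicative convolution of the dilation measures is just a clean repackaging of the paper's Tonelli-plus-substitution computations (the same changes of variables appear, e.g.\ $s^p=t^p-v^p$ for Part 1, the Beta-function substitution for Part 2, the Laplace transform \eqref{eq: laplace stable} for Part 3), and your subordination identity $UV^{p/q}\sim f_{r/p}$ for Part 4 is exactly the paper's Lemma \ref{lemma: stable density}(3), proved the same way via conditioning on $V$. The domain bookkeeping via Theorem \ref{thrm: domains} and Lemma \ref{lemma: ranges} also matches the paper.
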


The proof is given in Section \ref{sec: proofs}. In all cases equality of domains is part of the result. Further, in the above, all compositions commute. Before proceeding, we recall a result from \cite{Maejima:Nakahara:2009}.  

\begin{prop}\label{prop: onto on psi}
If $\alpha<2$ and $p>0$ then the transform $\Psi_{\alpha,p}$ is one-to-one.
\end{prop}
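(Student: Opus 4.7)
The plan is to reduce one-to-oneness to the uniqueness of Laplace transforms, after stripping off the directional component. For each Borel set $A\subset S^{d-1}$ and each $u>0$ put
$$
B_A(u)=\{x\in\mathbb R^d:x/|x|\in A,\ |x|>u\},\qquad N_A(u)=M(B_A(u)).
$$
The sets $B_A(u)$ generate the Borel $\sigma$-algebra on $\mathbb R^d\setminus\{0\}$, and $M(\{0\})=0$, so it is enough to show that the map $(A,u)\mapsto N_A(u)$ is recoverable from $\Psi_{\alpha,p}M$.

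The first step is to exploit the dilation structure. Since $s^{-1}B_A(u)=B_A(u/s)$ for every $s>0$, the definition of the upsilon transform gives
$$
\tilde N_A(u):=[\Psi_{\alpha,p}M](B_A(u))=\int_0^\infty N_A(u/s)\,s^{-\alpha-1}e^{-s^p}\,\rd s.
$$
The successive substitutions $t=u/s$ and $v=t^{-p}$ (all integrands are nonnegative, so Tonelli's theorem keeps us honest) rearrange this into
$$
u^\alpha\tilde N_A(u)=\frac{1}{p}\int_0^\infty N_A\!\left(v^{-1/p}\right)v^{-\alpha/p-1}e^{-u^pv}\,\rd v,
$$
i.e.\ the Laplace transform of $g_A(v):=p^{-1}N_A(v^{-1/p})v^{-\alpha/p-1}$ evaluated at the point $u^p$. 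The left-hand side is finite for each $u>0$ because $\Psi_{\alpha,p}M$ is a L\'evy measure and $B_A(u)$ is bounded away from the origin, so $g_A$ is integrable against every $e^{-u^pv}\rd v$ with $u>0$.

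The second step is now immediate. As $u$ sweeps through $(0,\infty)$, so does $u^p$, so the Laplace transform of the nonnegative function $g_A$ is entirely determined on $(0,\infty)$ by the measure $\Psi_{\alpha,p}M$. Standard uniqueness of the Laplace transform for nonnegative functions forces $g_A$, and therefore $N_A$, to be uniquely determined almost everywhere. Since $u\mapsto N_A(u)$ is nonincreasing and right-continuous, it is in fact uniquely determined at every $u>0$. Letting $A$ range over Borel subsets of $S^{d-1}$ then pins down $M$, proving injectivity.

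The only places I expect to have to be even mildly careful are the justification of the changes of variable (handled by nonnegativity plus Tonelli) and checking that $g_A$ is ``tame enough'' for Laplace uniqueness to apply, which follows automatically from the finiteness of $\tilde N_A(u)$ for every $u>0$. The hypothesis $\alpha<2$ plays no role in this argument beyond guaranteeing that $\mathfrak D(\Psi_{\alpha,p})$ is nontrivial, via Theorem~\ref{thrm: domains}.
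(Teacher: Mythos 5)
The paper does not actually prove this proposition: it is imported as a known result from Maejima and Nakahara (2009), so there is no internal proof to compare against. Your argument is correct and self-contained, and it is essentially the standard mechanism behind such injectivity statements (the same idea underlies the uniqueness of Rosi\'nski's tempering measure): slice $M$ by direction and radial tail via $N_A(u)=M(B_A(u))$, use $s^{-1}B_A(u)=B_A(u/s)$ to reduce $[\Psi_{\alpha,p}M](B_A(u))$ to a one-dimensional integral, and recognize $u^\alpha\tilde N_A(u)$ as the Laplace transform of $g_A$ evaluated at $u^p$. The two substitutions check out, and the finiteness needed to invoke Laplace uniqueness is exactly the hypothesis $M\in\mathfrak D(\Psi_{\alpha,p})$ together with $B_A(u)$ being bounded away from the origin. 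Two details are glossed over but harmless. First, to pass from $N_A=N_A'$ for all $A,u$ to $M=M'$ you should note that the sets $B_A(u)$ form a $\pi$-system (indeed $B_A(u)\cap B_{A'}(u')=B_{A\cap A'}(u\vee u')$) along which $M$ is $\sigma$-finite, since $B_{S^{d-1}}(1/n)\uparrow\mathbb R^d\setminus\{0\}$ with finite $M$-measure; merely generating the Borel $\sigma$-algebra is not by itself enough for uniqueness of measures. Second, the right-continuity of $u\mapsto N_A(u)$ that you use to upgrade a.e.\ equality to everywhere equality is continuity from below of $M$ applied to $B_A(u_n)\uparrow B_A(u_0)$ as $u_n\downarrow u_0$, which deserves a line. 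You are also right that $\alpha<2$ plays no role beyond making the domain nontrivial.
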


Combining this with Theorem \ref{thrm: composition} will give the following.

\begin{cor}\label{cor: one-to-one}
If $\beta<\alpha<2$ and $p>0$ then the transform $\mathfrak T_{\beta\to\alpha,p}$ is one-to-one. If $0<q<p$ and $\alpha<0$ then the transform $\mathfrak P_{\alpha,p\to q}$ is one-to-one. 
\end{cor}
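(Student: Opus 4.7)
The plan is to deduce both injectivity statements from Proposition \ref{prop: onto on psi} by composing on the left with a suitable $\Psi$ transform and using the factorizations from Theorem \ref{thrm: composition} together with the commutativity of upsilon compositions (Proposition 4.1 of \cite{Barndorff-Nielsen:Rosinski:Thorbjornsen:2008}). In both cases the structure is the same: start from $TM_1=TM_2$, apply the appropriate $\Psi$, rewrite using commutativity, invoke the relevant composition identity to collapse the two transforms into a single $\Psi_{\alpha,\cdot}$, and then apply Proposition \ref{prop: onto on psi}.

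For the first claim, I would fix $M_1,M_2\in\mathfrak D(\mathfrak T_{\beta\to\alpha,p})$ with $\mathfrak T_{\beta\to\alpha,p}M_1=\mathfrak T_{\beta\to\alpha,p}M_2$. By Lemma \ref{lemma: ranges} each $\mathfrak T_{\beta\to\alpha,p}M_i$ lies in $\mathfrak D(\Psi_{\beta,p})$, so applying $\Psi_{\beta,p}$ to both sides is legitimate. Commutativity converts $\Psi_{\beta,p}\mathfrak T_{\beta\to\alpha,p}$ into $\mathfrak T_{\beta\to\alpha,p}\Psi_{\beta,p}$, which equals $\Psi_{\alpha,p}$ by part 1 of Theorem \ref{thrm: composition} (and the implicit containment $\mathfrak D(\Psi_{\alpha,p})\subset\mathfrak D(\Psi_{\beta,p})$ needed to make sense of this is immediate from the characterization in Theorem \ref{thrm: domains}). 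One is left with $\Psi_{\alpha,p}M_1=\Psi_{\alpha,p}M_2$, and Proposition \ref{prop: onto on psi} delivers $M_1=M_2$.

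For the second claim, I would do the analogous thing for $\mathfrak P$. Fix $M_1,M_2\in\mathfrak D(\mathfrak P_{\alpha,p\to q})$ with $\mathfrak P_{\alpha,p\to q}M_1=\mathfrak P_{\alpha,p\to q}M_2$. Because $\alpha<0$, Theorem \ref{thrm: domains} identifies $\mathfrak D(\Psi_{\alpha,p})$ with $\mathfrak M^0$, so $\Psi_{\alpha,p}$ may be applied to any L\'evy measure, in particular to both sides. Commutativity together with part 3 of Theorem \ref{thrm: composition} then gives $\Psi_{\alpha,p}\mathfrak P_{\alpha,p\to q}M_i=\mathfrak P_{\alpha,p\to q}\Psi_{\alpha,p}M_i=\Psi_{\alpha,q}M_i$, hence $\Psi_{\alpha,q}M_1=\Psi_{\alpha,q}M_2$, and Proposition \ref{prop: onto on psi} again finishes the job.

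There is no serious obstacle; the argument is essentially formal once the factorization identities and the domain characterizations are in hand. The only care required is to verify at each step that the operators are being applied to measures in their domain. In the $\mathfrak T$ case this is exactly what Lemma \ref{lemma: ranges} provides, while in the $\mathfrak P$ case the hypothesis $\alpha<0$ (rather than the weaker $\alpha<2$) is precisely what forces $\mathfrak D(\Psi_{\alpha,p})=\mathfrak M^0$ and thereby removes any integrability obstruction to applying $\Psi_{\alpha,p}$.
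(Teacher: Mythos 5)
Your argument is correct and is essentially the paper's own proof: in both parts you compose on the left with the appropriate $\Psi$ transform, use commutativity of upsilon compositions and the factorizations of Theorem \ref{thrm: composition} to reduce to $\Psi_{\alpha,p}M_1=\Psi_{\alpha,p}M_2$ (resp.\ $\Psi_{\alpha,q}M_1=\Psi_{\alpha,q}M_2$), and conclude via Proposition \ref{prop: onto on psi}. The domain checks you supply (Lemma \ref{lemma: ranges} for the $\mathfrak T$ case, and $\alpha<0$ forcing $\mathfrak D(\Psi_{\alpha,p})=\mathfrak M^0$ for the $\mathfrak P$ case) are exactly the points the paper flags.
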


For $\mathfrak T_{\beta\to\alpha,p}$ a different proof was given in \cite{Maejima:Perez-Abreu:Sato:2013}. For $\mathfrak P_{\alpha,p\to q}$, the case where $\alpha\ge0$ is more complicated and will be dealt with in a future work.

\begin{proof}
We begin with Part 1. Let $M,M'\in\mathfrak D(\mathfrak T_{\beta\to\alpha,p})=\mathfrak D(\Psi_{\alpha,p})$. If $\mathfrak T_{\beta\to\alpha,p}M=\mathfrak T_{\beta\to\alpha,p}M'$ then $\Psi_{\beta,p}\mathfrak T_{\beta\to\alpha,p}M=\Psi_{\beta,p}\mathfrak T_{\beta\to\alpha,p}M'$ and hence by commutativity and Theorem \ref{thrm: composition} we have $\Psi_{\alpha,p}M=\Psi_{\alpha,p}M'$. From here Proposition \ref{prop: onto on psi} implies that $M=M'$ and hence $\mathfrak T_{\beta\to\alpha,p}$ is one-to-one. The proof of Part 2 is similar. We just need to note that, in this case, $\mathfrak D(\Psi_{\alpha,q})=\mathfrak M^0$. 
\end{proof}

We now interpret Theorem \ref{thrm: composition} in the context of tempered stable distributions. For $\alpha<2$ and $p>0$ let $LTS^p_\alpha$ be the class of L\'evy measures of $p$-tempered $\alpha$-stable distributions, and note that $LTS^p_\alpha=\mathfrak R(\Psi_{\alpha,p})$. For $-\infty<\beta<\alpha<2$ and $0<q<p<\infty$ let $\mathfrak T_{\beta\to\alpha,p}^{TS}$ and $\mathfrak P^{TS}_{\alpha,p\to q}$ be the restrictions of $\mathfrak T_{\beta\to\alpha,p}$ and $\mathfrak P_{\alpha,p\to q}$ to the domains $LTS^p_\beta\cap \mathfrak D(\mathfrak T_{\beta\to\alpha,p})$ and $LTS^p_\alpha$ respectively. Note that, by Lemma \ref{lemma: ranges}, $LTS^p_\alpha\subset \mathfrak D(\mathfrak P_{\alpha,p\to q})$.

\begin{cor}
For $\beta<\alpha<2$ and $p>0$ the mapping  $\mathfrak T_{\beta\to\alpha,p}^{TS}$ is a bijection from $LTS^p_\beta\cap \mathfrak D(\mathfrak T_{\beta\to\alpha,p})$ onto $LTS^p_\alpha$. For $0<q<p$ and $\alpha<2$ the mapping $\mathfrak P^{TS}_{\alpha,p\to q}$ is a bijection from $LTS^p_\alpha$ onto $LTS^q_\alpha$.
\end{cor}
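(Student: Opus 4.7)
The plan is to derive both bijections by exploiting the factorizations in Theorem \ref{thrm: composition} together with the injectivity of $\Psi_{\alpha,p}$ supplied by Proposition \ref{prop: onto on psi}.

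For the first statement, I would begin by checking that $\mathfrak T_{\beta\to\alpha,p}^{TS}$ actually lands in $LTS^p_\alpha$. Given $M \in LTS^p_\beta \cap \mathfrak D(\mathfrak T_{\beta\to\alpha,p})$, write $M = \Psi_{\beta,p} M_0$ with $M_0 \in \mathfrak D(\Psi_{\beta,p})$. The hypothesis $M \in \mathfrak D(\mathfrak T_{\beta\to\alpha,p})$ is exactly the statement that $M_0 \in \mathfrak D(\mathfrak T_{\beta\to\alpha,p}\Psi_{\beta,p})$, which by the equality-of-domains clause of Theorem \ref{thrm: composition}(1) coincides with $\mathfrak D(\Psi_{\alpha,p})$. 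Hence $\mathfrak T_{\beta\to\alpha,p} M = \Psi_{\alpha,p} M_0 \in LTS^p_\alpha$. For surjectivity, given $N = \Psi_{\alpha,p} M_0 \in LTS^p_\alpha$, Theorem \ref{thrm: domains}(1) gives $\mathfrak D(\Psi_{\alpha,p}) \subset \mathfrak D(\Psi_{\beta,p})$, so $M := \Psi_{\beta,p} M_0$ is a well-defined element of $LTS^p_\beta$ and, by the same factorization, lies in $\mathfrak D(\mathfrak T_{\beta\to\alpha,p})$ with $\mathfrak T_{\beta\to\alpha,p} M = N$. Injectivity is immediate from Corollary \ref{cor: one-to-one}, whose hypothesis $\beta < \alpha < 2$ is met.

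For the second statement, the surjectivity and ``into'' arguments run along the same lines, using Lemma \ref{lemma: ranges} to ensure $LTS^p_\alpha \subset \mathfrak D(\mathfrak P_{\alpha,p\to q})$ so the restriction is well-defined, the observation from Theorem \ref{thrm: domains}(1) that $\mathfrak D(\Psi_{\alpha,p}) = \mathfrak D(\Psi_{\alpha,q})$ (the domain depends only on $\alpha$), and Theorem \ref{thrm: composition}(3) to identify $\mathfrak P_{\alpha,p\to q}\Psi_{\alpha,p} M_0$ with $\Psi_{\alpha,q} M_0$. The subtlety is injectivity: Corollary \ref{cor: one-to-one} only delivers injectivity of $\mathfrak P_{\alpha,p\to q}$ when $\alpha<0$, whereas our hypothesis is merely $\alpha<2$. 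I would sidestep the corollary entirely and argue directly on the restricted domain: if $M_i = \Psi_{\alpha,p} M_{0,i} \in LTS^p_\alpha$ for $i=1,2$ satisfy $\mathfrak P_{\alpha,p\to q} M_1 = \mathfrak P_{\alpha,p\to q} M_2$, the factorization yields $\Psi_{\alpha,q} M_{0,1} = \Psi_{\alpha,q} M_{0,2}$, and Proposition \ref{prop: onto on psi} then forces $M_{0,1} = M_{0,2}$, whence $M_1 = M_2$.

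The only real obstacle is the domain bookkeeping, in particular reading the equality-of-domains clause of Theorem \ref{thrm: composition} both directions at each step; once this is kept straight, the argument is essentially manipulation of the given factorizations. The injectivity of $\mathfrak P^{TS}_{\alpha,p\to q}$ for $\alpha \in [0,2)$ is the one point where Corollary \ref{cor: one-to-one} is insufficient, and pulling the equation back through $\mathfrak P_{\alpha,p\to q}\Psi_{\alpha,p} = \Psi_{\alpha,q}$ circumvents exactly this gap.
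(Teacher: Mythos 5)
Your proof is correct and takes essentially the same route as the paper: the paper declares the result immediate from Theorem \ref{thrm: composition} and Corollary \ref{cor: one-to-one} except for the injectivity of $\mathfrak P^{TS}_{\alpha,p\to q}$ when $\alpha\in[0,2)$, which it handles exactly as you do --- by pulling the equation back through $\mathfrak P_{\alpha,p\to q}\Psi_{\alpha,p}=\Psi_{\alpha,q}$ and invoking Proposition \ref{prop: onto on psi}. You have simply written out the domain bookkeeping and the into/onto verifications that the paper leaves implicit.
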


\begin{proof}
The result is immediate from Theorem \ref{thrm: composition} and Corollary \ref{cor: one-to-one}, except in the case of $\mathfrak P^{TS}_{\alpha,p\to q}$ with $\alpha\in[0,2)$. In this case we can show that $\mathfrak P^{TS}_{\alpha,p\to q}$  is one-to-one by arguments similar to the proof of Corollary \ref{cor: one-to-one}.  
\end{proof}

\section{Proofs}\label{sec: proofs}

In this section we prove our main results. First, recall that, for $r\in(0,1)$, $f_r$ is the probability density of a fully right-skewed $r$-stable distribution with Laplace transform given by \eqref{eq: laplace stable}. 

\begin{lemma}\label{lemma: stable density}
1. If $r\in(0,1)$ then there is a $K>0$ depending on $r$ such that
\begin{eqnarray*}
f_r(x) \sim Kx^{-r-1}\mbox{ as } x\to\infty.
\end{eqnarray*}
2. If $r\in(0,1)$ and $\beta\in(-\infty,r)$ then
\begin{eqnarray*}
\int_0^\infty s^{\beta} f_r(s)\rd s<\infty.
\end{eqnarray*}
3. If $r,p\in(0,1)$ then
$$
f_{rp} (u ) = \int_0^\infty f_r(uy^{-1/r}) y^{-1/r}f_p(y)\rd y.
$$
\end{lemma}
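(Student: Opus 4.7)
The plan is to handle the three parts separately: Parts 1 and 2 are classical facts about positive stable densities, while Part 3 is a Laplace-transform identity that reduces to Fubini.

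For Part 1, I would invoke the standard tail asymptotics for a positive $r$-stable law with Laplace exponent $t^r$. One route is to apply a Karamata Tauberian theorem to the Laplace transform $e^{-t^r}$ to obtain $P(X>x)\sim x^{-r}/\Gamma(1-r)$, and then pass from tails to densities via the monotone density theorem (the density is eventually monotone and regularly varying, since $f_r$ admits a convergent series representation with leading term of order $x^{-r-1}$). Alternatively this is stated directly in Sato's book on infinitely divisible distributions. The constant $K$ comes out to $r/\Gamma(1-r)$, though its exact value is irrelevant for the subsequent applications.

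For Part 2, I would split the integral at $1$. On $[1,\infty)$, Part 1 gives $s^\beta f_r(s)=O(s^{\beta-r-1})$, which is integrable precisely when $\beta<r$. On $(0,1]$, I would use the well-known fact that the positive $r$-stable density decays super-polynomially at the origin; in fact there exist $c,C>0$ with $f_r(s)\le C\exp(-c s^{-r/(1-r)})$ for small $s$, which makes $s^\beta f_r(s)$ integrable near $0$ for every real $\beta$.

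Part 3 is the main identity, and I would prove it by computing the Laplace transform of the right-hand side and matching it with that of $f_{rp}$. Since the integrand is non-negative, Fubini applies freely: with $x=uy^{-1/r}$,
\begin{eqnarray*}
\int_0^\infty e^{-tu}\int_0^\infty f_r(uy^{-1/r})y^{-1/r}f_p(y)\,\rd y\,\rd u
&=& \int_0^\infty f_p(y)\int_0^\infty e^{-ty^{1/r}x}f_r(x)\,\rd x\,\rd y\\
&=& \int_0^\infty e^{-t^r y}f_p(y)\,\rd y \;=\; e^{-t^{rp}},
\end{eqnarray*}
where the last two equalities use \eqref{eq: laplace stable} for $f_r$ and then for $f_p$ (after noting that $(t^r)^p=t^{rp}$). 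Since this matches the Laplace transform of $f_{rp}$, uniqueness of Laplace transforms on $(0,\infty)$ forces the identity pointwise a.e., and by continuity of both sides in $u$ (which follows from dominated convergence using Parts 1 and 2) the identity holds for every $u>0$. There is no genuine obstacle here; the only thing to check is the elementary integrability estimate needed to justify the substitution and Fubini, and Parts 1 and 2 are already tailored to supply it.
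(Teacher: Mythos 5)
Your proposal is correct and matches the paper's proof in substance: Parts 1 and 2 rest on the same classical facts about positive stable densities (the paper cites Sato (1999), eq.\ (14.36), for the tail asymptotics and Uchaikin--Zolotarev for the negative moments), and your Part 3 is the paper's argument read in the analytic direction --- the paper takes independent $X\sim f_r$, $Y\sim f_p$, shows $Y^{1/r}X\sim f_{rp}$ by exactly your Laplace-transform computation, and then writes out the density of the product, which is the stated integral. There is no gap.
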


\begin{proof}
Part 1 follows from (14.36) in \cite{Sato:1999}. When $\beta<0$ Part 2 follows from Theorem 5.4.1 in \cite{Uchaikin:Zolotarev:1999} and when $\beta\in[0,r)$ it follows from Part 1. Now, let $X\sim f_r$ and $Y\sim f_p$ be independent random variables. The fact that
$$
\rE\left[e^{-tY^{1/r}X}\right] = \rE\left[\rE\left[e^{-tY^{1/r}X}|Y\right]\right] =  \rE\left[e^{-t^rY}\right] = e^{-t^{rp}}
$$
implies that $Y^{1/r}X\sim f_{rp}$. From here Part 3 follows by representing the density of  $Y^{1/r}X$ in terms of the densities of $X$ and $Y$.
\end{proof}

\begin{lemma}\label{lemma: domains}
Assume that $\rho(\rd s) = g(s)1_{s>0}\rd s$ and that there exist $\delta\in(0,1)$, $\alpha\in\mathbb R$, and $0<a<b<\infty$ such that $a<s^{\alpha+1}g(s)<b$ for all $s\in(0,\delta)$. When $\alpha<2$ assume also that $\int_0^\infty s^2 g(s)\rd s<\infty$. In this case
$$
\mathfrak D(\Upsilon_\rho)=\left\{
\begin{array}{ll}
\mathfrak M^0 & \mbox{if }\alpha<0\\
\mathfrak M^{\log} & \mbox{if }\alpha=0\\
\mathfrak M^\alpha & \mbox{if }\alpha\in(0,2)\\
\{0\} & \mbox{if }\alpha\ge2
\end{array}
\right..
$$
Further, when $\alpha\in(0,2)$ we have $\mathfrak R(\Upsilon_{\rho})\subset\mathfrak M^\beta$ for every $\beta\in[0,\alpha)$.
\end{lemma}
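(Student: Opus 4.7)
The plan is to reduce the question to analyzing one auxiliary function. A routine Tonelli argument applied to $[\Upsilon_\rho M](B)=\int_0^\infty M(s^{-1}B)g(s)\,\rd s$ gives, for every nonnegative Borel $f$ on $\mathbb R^d$,
$$
\int_{\mathbb R^d} f(y)[\Upsilon_\rho M](\rd y) = \int_{\mathbb R^d}\int_0^\infty f(sx)g(s)\,\rd s\,M(\rd x).
$$
Taking $f(y)=|y|^2\wedge 1$, the requirement that $\Upsilon_\rho M$ be a L\'evy measure becomes $\int_{\mathbb R^d} h(x)M(\rd x)<\infty$, where
$$
h(x) = \int_0^\infty\bigl(s^2|x|^2\wedge 1\bigr)g(s)\,\rd s.
$$
So the whole proof reduces to matching the asymptotics of $h$ at $0$ and $\infty$ against the definitions of $\mathfrak M^\alpha$ and $\mathfrak M^{\log}$.

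To this end I split at $s=1/|x|$:
$$
h(x) = |x|^2\int_0^{1/|x|} s^2 g(s)\,\rd s + \int_{1/|x|}^\infty g(s)\,\rd s.
$$
For $|x|\to 0$ the first term tends to $|x|^2\int_0^\infty s^2 g(s)\,\rd s$ by monotone convergence, while the second is at most $|x|^2\int_{1/|x|}^\infty s^2 g(s)\,\rd s=o(|x|^2)$; hence $h(x)\asymp |x|^2$ near the origin whenever $\alpha<2$. For $|x|\to\infty$ the cutoff $1/|x|$ eventually lies in $(0,\delta)$, and the bounds $a<s^{\alpha+1}g(s)<b$ there give the first piece $\asymp|x|^\alpha$, while $\int_{1/|x|}^\delta s^{-\alpha-1}\,\rd s$ is $\asymp|x|^\alpha$ if $\alpha>0$, $\asymp\log|x|$ if $\alpha=0$, and bounded if $\alpha<0$ (the remainder $\int_\delta^\infty g(s)\,\rd s\le \delta^{-2}\int_0^\infty s^2 g(s)\,\rd s$ is always absorbed). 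These four regimes produce the four listed characterizations for $\alpha<2$. For $\alpha\ge 2$, the lower bound $g(s)\ge as^{-\alpha-1}$ on $(0,\delta)$ forces $\int_0^{\delta\wedge(1/|x|)}s^{1-\alpha}\,\rd s=\infty$, so $h(x)=\infty$ for every $x\ne 0$, whence $\int h\,M(\rd x)<\infty$ only when $M=0$.

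For the range statement, fix $\alpha\in(0,2)$ and $\beta\in[0,\alpha)$, and let $N=\Upsilon_\rho M$ with $M\in\mathfrak M^\alpha$. Applying the same Tonelli identity with $f(y)=|y|^\beta 1_{|y|>1}$ gives
$$
\int_{|y|>1}|y|^\beta N(\rd y) = \int_{\mathbb R^d}|x|^\beta G(x)M(\rd x),\qquad G(x) = \int_{1/|x|}^\infty s^\beta g(s)\,\rd s.
$$
For $|x|>1$ the contribution of $(1/|x|,\delta)$ to $G(x)$ is $\asymp|x|^{\alpha-\beta}$ (the exponent $\beta-\alpha-1$ is strictly less than $-1$), while $\int_\delta^\infty s^\beta g(s)\,\rd s\le\delta^{\beta-2}\int_0^\infty s^2 g(s)\,\rd s<\infty$ because $\beta<2$; so $|x|^\beta G(x)\lesssim|x|^\alpha$ on $\{|x|>1\}$, which is $M$-integrable because $M\in\mathfrak M^\alpha$. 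For $|x|\le 1$ I estimate $G(x)=\int_{1/|x|}^\infty s^{\beta-2}\cdot s^2 g(s)\,\rd s\le|x|^{2-\beta}\int_0^\infty s^2 g(s)\,\rd s$, giving $|x|^\beta G(x)\lesssim|x|^2$, which is $M$-integrable on $\{|x|\le 1\}$. Combined with $N\in\mathfrak M^0$ (from the first part), this yields $N\in\mathfrak M^\beta$. The main bookkeeping obstacle is simply keeping the three regimes $\alpha<0$, $\alpha=0$, $0<\alpha<2$ straight in the asymptotics of $h$; beyond the Tonelli reduction and the explicit integration of the $s^{-\alpha-1}$ model, no new idea is required.
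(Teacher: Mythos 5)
Your proof is correct and follows essentially the same route as the paper: a Tonelli reduction, splitting the inner $s$-integral at $1/|x|$ and at $\delta$, using the two-sided bound $a<s^{\alpha+1}g(s)<b$ near $0$ and $\int_0^\infty s^2g(s)\,\rd s<\infty$ away from $0$, with the same treatment of the $\alpha\ge2$ case and of the range statement. The only difference is cosmetic: you package everything into the single kernel $h(x)=\int_0^\infty(s^2|x|^2\wedge1)g(s)\,\rd s$ and read off its asymptotics, whereas the paper handles $\int_{|x|\le1}|x|^2$ and $\int_{|x|>1}$ separately.
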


We note that a related result is given in Theorem 4.1 of \cite{Sato:2010}.

\begin{proof}
Fix $M\in\mathfrak M^0$. We need to characterize when 
$$
\int_{\mathbb R^d}\left(|x|^2\wedge1\right)[\Upsilon_\rho M](\rd x)<\infty.
$$
First assume $\alpha\ge2$. If $M\ne0$ then there exists a $\delta'\in(0,\delta)$ such that $M(|x|\le1/\delta')>0$ and 
\begin{eqnarray*}
\int_{|x|\le 1}|x|^2[\Upsilon_\rho M](\rd x)&=&\int_{\mathbb R^d}|x|^2\int_0^{1/|x|} s^2 g(s)\rd sM(\rd x)\\
&\ge& \int_{|x|\le1/\delta'}|x|^2\int_0^{\delta'} s^2 g(s)\rd sM(\rd x)\\
&\ge& a\int_{|x|\le1/\delta'}|x|^2\int_0^{\delta'} s^{1-\alpha} \rd sM(\rd x)=\infty.
\end{eqnarray*}
Now assume that $\alpha<2$. We have
\begin{eqnarray*}
&&\int_{|x|\le1}|x|^2[\Upsilon_\rho M](\rd x)=\int_{\mathbb R^d}|x|^2\int_0^{1/|x|} s^2 g(s)\rd sM(\rd x)\\
&&\ \le \int_{|x|\le1/\delta}|x|^2M(\rd x)\int_0^{\infty} s^2 g(s)\rd s + b\int_{|x|>1/\delta}|x|^2\int_0^{1/|x|} s^{1-\alpha}\rd sM(\rd x)\\
&&\  = \int_{|x|\le1/\delta}|x|^2M(\rd x)\int_0^{\infty} s^2 g(s)\rd s + \frac{1}{2-\alpha}\int_{|x|>1/\delta}|x|^\alpha M(\rd x)<\infty
\end{eqnarray*}
and
\begin{eqnarray*}
\int_{|x|>1}[\Upsilon_\rho M](\rd x)&=& \int_{|x|\le1/\delta}\int_{1/|x|}^\infty g(s)\rd sM(\rd x)\\
&&\qquad + \int_{|x|>1/\delta}\int_{1/|x|}^\delta g(s)\rd sM(\rd x)\\
&&\qquad + \int_{|x|>1/\delta}\int_{\delta}^\infty g(s)\rd sM(\rd x) =: I_1+I_2+I_3.
\end{eqnarray*}
We have $I_{3}<\infty$ since $\int_{\delta}^\infty g(s)\rd s\le \delta^{-2}\int_0^\infty s^2g(s)\rd s<\infty$ and 
\begin{eqnarray*}
I_1\le \int_{|x|\le1/\delta}|x|^2M(\rd x)\int_0^\infty s^2 g(s)\rd s<\infty.
\end{eqnarray*}
Now note that
$$
a\int_{|x|>1/\delta}\int_{1/|x|}^\delta s^{-1-\alpha}\rd s M(\rd x) \le I_{2} \le  b\int_{|x|>1/\delta}\int_{1/|x|}^\delta s^{-1-\alpha}\rd s M(\rd x).
$$
From here the fact that $\int_{1/|x|}^\delta s^{-1-\alpha}\rd s = (|x|^\alpha-\delta^{-\alpha})/\alpha$ when $\alpha\ne0$ and it equals 
$\log|x\delta|$ when $\alpha=0$ completes the proof of the first part.

Now assume that $\alpha\in(0,2)$ and $\beta\in[0,\alpha)$. It suffices to show that for any $M\in\mathfrak M^\alpha$
$$
\int_{|x|>1} |x|^\beta [\Upsilon_\rho M](\rd x)= \int_{\mathbb R^d}|x|^\beta \int_{|x|^{-1}}^\infty s^{\beta}g(s)\rd sM(\rd x) <\infty.
$$
Observing that
$$
\int_{|x|\le1/\delta}|x|^\beta \int_{|x|^{-1}}^\infty s^{\beta}g(s)\rd sM(\rd x) \le \int_{|x|\le1/\delta}|x|^2M(\rd x) \int_{0}^\infty s^{2}g(s)\rd s<\infty,
$$
\begin{eqnarray*}
\int_{|x|>1/\delta}|x|^\beta \int_{\delta}^\infty s^{\beta}g(s)\rd sM(\rd x) <\infty,
\end{eqnarray*}
and
\begin{eqnarray*}
\int_{|x|>1/\delta}|x|^\beta \int_{|x|^{-1}}^\delta s^{\beta}g(s)\rd sM(\rd x) &\le& b\int_{|x|>1/\delta}|x|^\beta \int_{|x|^{-1}}^\infty s^{\beta-\alpha-1}\rd sM(\rd x)\\
&=& \frac{b}{\alpha-\beta}\int_{|x|>1/\delta}|x|^\alpha M(\rd x)<\infty
\end{eqnarray*}
gives the result.
\end{proof}

We can now prove Theorem \ref{thrm: domains} and Lemma \ref{lemma: ranges}.

\begin{proof}[Proof of Theorem \ref{thrm: domains} and Lemma \ref{lemma: ranges}]
Both results follow easily from Lemma \ref{lemma: domains}, we just need to check that the assumptions hold. We only verify this for $\mathfrak P_{\alpha,p\to q}$ as it is immediate for the other cases.  In this case we have $g(s) = pf_{q/p}(s^{-p})s^{-\alpha-p-1}$.  Lemma \ref{lemma: stable density} implies that
$$
\int_0^\infty s^2 g(s)\rd s=p\int_0^\infty f_{q/p}(s^{-p})s^{1-\alpha-p}\rd s= \int_0^\infty f_{q/p}(v)v^{-(2-\alpha)/p}\rd v<\infty
$$
and that $g(s)\sim pK s^{q-\alpha-1}$ as $s\downarrow0$. From here the result follows.
\end{proof}

\begin{proof}[Proof of Theorem \ref{thrm: composition}]
In all cases, equality of the domains follows from Theorem \ref{thrm: domains} and Lemma \ref{lemma: ranges}. We now turn to proving the equalities. We begin with Part 1. Fix $M\in\mathfrak D(\mathfrak T_{\beta\to\alpha,p}\Psi_{\beta,p})$ and let $M' = \mathfrak T_{\beta\to\alpha,p}\Psi_{\beta,p}M$. For any $B\in\mathfrak B(\mathbb R^d)$
\begin{eqnarray*}
M'(B) &=&   K_{\alpha,\beta,p}^{-1}\int_0^1 [\Psi_{\beta,p}M](u^{-1}B) u^{-\alpha-1}\left(1-u^p\right)^{\frac{\alpha-\beta}{p}-1}\rd u\\
&=&K_{\alpha,\beta,p}^{-1}\int_0^\infty\int_0^1M((ut)^{-1}B) t^{-1-\beta}e^{-t^p} u^{-\alpha-1}\left(1-u^p\right)^{\frac{\alpha-\beta-p}{p}}\rd u\rd t\\
&=& K_{\alpha,\beta,p}^{-1}\int_0^\infty\int_0^tM(v^{-1}B) t^{\alpha-\beta-1}e^{-t^p}v^{-\alpha-1}\left(1-\frac{v^p}{t^p}\right)^{\frac{\alpha-\beta-p}{p}}\rd v\rd t\\
&=& K_{\alpha,\beta,p}^{-1}\int_0^\infty\int_v^\infty M(v^{-1}B) t^{p-1}e^{-t^p}
v^{-\alpha-1}\left(t^p-v^p\right)^{\frac{\alpha-\beta-p}{p}}\rd t\rd v\\
&=& K_{\alpha,\beta,p}^{-1}\int_0^\infty M(v^{-1}B) e^{-v^p}v^{-\alpha-1}\rd v \int_0^\infty
e^{-s^p} s^{\alpha-\beta-1}\rd s\\
&=& \int_0^\infty M(v^{-1}B) e^{-v^p}v^{-\alpha-1}\rd v = [\Psi_{\alpha,p}M](B),
\end{eqnarray*}
where the third line follows by the substitution $v=ut$ and the fifth by the substitution $s^p = t^p-v^p$

We now show Part 2. Note that by the well-known relationship between beta and gamma functions
\begin{eqnarray*}
\int_0^1w^{\alpha-\beta-1}\left(1-w^p\right)^{\frac{\beta-\gamma}{p}-1}\rd w &=& p^{-1} \int_0^1w^{\frac{\alpha-\beta}{p}-1}\left(1-w\right)^{\frac{\beta-\gamma}{p}-1}\rd w\\
&=& p^{-1}\frac{\Gamma\left(\frac{\alpha-\beta}{p}\right)\Gamma\left(\frac{\beta-\gamma}{p}\right)}{\Gamma\left(\frac{\alpha-\gamma}{p}\right)} = \frac{K_{\alpha,\beta,p}K_{\beta,\gamma,p}}{K_{\alpha,\gamma,p}}.
\end{eqnarray*}
For simplicity of notation let $A=K^{-1}_{\alpha,\beta,p}K^{-1}_{\beta,\gamma,p}$ and note that
$$
K_{\alpha,\gamma,p}^{-1}=A\int_0^1w^{\alpha-\beta-1}\left(1-w^p\right)^{\frac{\beta-\gamma}{p}-1}\rd w.
$$
Fix $M\in\mathfrak D(\mathfrak T_{\beta\to\alpha, p} \mathfrak T_{\gamma\to\beta,p})$ and let $M' = \mathfrak T_{\beta\to\alpha, p} \mathfrak T_{\gamma\to\beta,p}M$. For $B\in\mathfrak B(\mathbb R^d)$
\begin{eqnarray*}
&&M'(B) =   K^{-1}_{\alpha,\beta,p}\int_0^1[\mathfrak T_{\gamma\to\beta,p}M](u^{-1}B) u^{-\alpha-1}\left(1-u^p\right)^{\frac{\alpha-\beta}{p}-1}\rd u\\
&&\ =   A\int_0^1 \int_0^1 M((ut)^{-1}B)  u^{-\alpha-1}\left(1-u^p\right)^{\frac{\alpha-\beta}{p}-1}\rd u t^{-\beta-1}\left(1-t^p\right)^{\frac{\beta-\gamma}{p}-1}\rd t\\
&&\ =  A\int_0^1 \int_0^t M(v^{-1}B) v^{-\alpha-1}\left(1-\frac{v^p}{t^p}\right)^{\frac{\alpha-\beta}{p}-1}\rd v t^{\alpha-\beta-1}\left(1-t^p\right)^{\frac{\beta-\gamma}{p}-1}\rd t\\
&&\ = A\int_0^1 M(v^{-1}B)  v^{-\alpha-1}\int_v^1\left(t^p-v^p\right)^{\frac{\alpha-\beta}{p}-1}\left(1-t^p\right)^{\frac{\beta-\gamma}{p}-1}t^{p-1}\rd t\rd v\\
&&\ =  A\int_0^1 M(v^{-1}B)  v^{-\alpha-1}(1-v^p)^{\frac{\alpha-\gamma}{p}-1}\rd v\int_0^1w^{\alpha-\beta-1}\left(1-w^p\right)^{\frac{\beta-\gamma}{p}-1}\rd w \\
&&\ =   K^{-1}_{\alpha,\gamma,p}\int_0^1 M(v^{-1}B)  v^{-\alpha-1}(1-v^p)^{(\alpha-\gamma)/p-1}\rd v = [\mathfrak T_{\gamma\to\alpha,p}M](B),
\end{eqnarray*}
where the third line follows by the substitution $v=ut$ and the fifth by the substitution $w^p=(t^p-v^p)/(1-v^p)$, which implies $1-t^p=(1-w^p)(1-v^p)$.

To show Part 3, fix $M\in\mathfrak D(\mathfrak P_{\alpha,p\to q} \Psi_{\alpha,p})$ and note that for $B\in\mathfrak B(\mathbb R^d)$
\begin{eqnarray*}
[\mathfrak P_{\alpha,p\to q} \Psi_{\alpha,p}M](B) &=&p \int_0^\infty f_{q/p}(s^{-p}) s^{-\alpha-p-1} [\Psi_{\alpha,p}M](s^{-1}B)\rd s\\
&=& \int_0^\infty f_{q/p}(s) s^{\alpha/p} [\Psi_{\alpha,p}M](s^{1/p}B)\rd s\\
&=& \int_0^\infty f_{q/p}(s) s^{\alpha/q}\int_0^\infty M(s^{1/q}t^{-1}B)t^{-1-\alpha}e^{-t^p}\rd t\rd s\\
&=&   \int_0^\infty M(v^{-1}B) v^{-1-\alpha}\int_0^\infty e^{-v^qs} f_{q/p}(s)\rd s\rd v \\
&=&\int_0^\infty M(v^{-1}B) v^{-1-\alpha}e^{-v^p}\rd v = [\Psi_{\alpha,p}M](B) ,
\end{eqnarray*}
where the fourth line follows by the substitution $v=s^{-1/p}t$.

For Part 4, fix $M\in\mathfrak D(\mathfrak P_{\alpha,q\to r}\mathfrak P_{\alpha,p\to q})$ and let $M'=\mathfrak P_{\alpha,q\to r}\mathfrak P_{\alpha,p\to q} M$. For any  $B\in\mathfrak B(\mathbb R^d)$
\begin{eqnarray*}
M'(B) &=&  \int_0^\infty f_{r/q}(t) t^{\alpha/q} [\mathfrak P_{\alpha,p\to q}M](t^{1/q}B)\rd t\\
&=&  \int_0^\infty f_{q/p}(s) s^{\alpha/p}\int_0^\infty f_{r/q}(t) t^{\alpha/q} M(t^{1/q}s^{1/p}B)\rd t\rd s\\
&=&  \int_0^\infty M(u^{1/p}B) u^{\alpha/p} \int_0^\infty f_{q/p}(ut^{-p/q})  f_{r/q}(t) t^{-p/q}\rd t\rd u\\
&=&  \int_0^\infty M(u^{1/p}B) u^{\alpha/p} f_{r/p}(u)\rd u=[\mathfrak P_{\alpha,p\to r} M](B),
\end{eqnarray*}
where the third line follows by the substitution $u=st^{p/q}$ and the fourth by Lemma \ref{lemma: stable density}.
\end{proof}

\end{document}